\title{Automorphisms of the truth-table degrees are fixed on a cone}
\author{Bernard A. Anderson \thanks{The author was partially supported by two projects of the Ministry of Education of the Czech Republic: 
Research project MSM0021620838 and Institute for Theoretical Computer Science project 1M0545.}\\Department of Theoretical Computer\\Science 
and Mathematical Logic\\Faculty of Mathematics and Physics\\Charles University\\Malostransk\'e N\'am\v{e}st\'i 25\\118 00 Prague 1, 
Czech Republic}
\newtheorem{theorem}{Theorem}[section]
\newtheorem{cor}[theorem]{Corollary}
\newtheorem*{claim}{Claim}
\theoremstyle{definition}
\newtheorem{definition}{Definition}
\newcommand{\setsep}{\ensuremath{\, | \;}}
\newcommand{\reals}{2^\omega}
\newcommand{\strings}{2^{<\omega}}
\newcommand{\num}{\in\omega}
\newcommand{\conv}{\!\downarrow}
\newcommand{\dive}{\!\uparrow}
\newcommand{\Dtt}{\ensuremath{D_{tt}}}
\newcommand{\ltt}{\ensuremath{\leq_{tt}}}
\newcommand{\gtt}{\ensuremath{\geq_{tt}}}
\newcommand{\ett}{\ensuremath{\equiv_{tt}}}
\newcommand{\half}{\ensuremath{\frac{1}{2}}}
\newcommand{\concat}{\: \hat{\ } \:}
\newcommand{\rstrd}{\mathrel{\mbox{\raisebox{.5mm}{$\upharpoonright $}}}} 
\newcommand{\smrstrd}{\mathrel{\mbox{\raisebox{.2mm}{$\scriptstyle \upharpoonright $}}}} 
\begin{document}
\maketitle

\begin{abstract}
Let \Dtt\ denote the set of truth-table degrees.  A bijection $\pi : \Dtt \to \Dtt$ is an automorphism if for all 
truth-table degrees $x$ and $y$ we have $x \ltt y \Leftrightarrow \pi (x) \ltt \pi (y)$.  We say an automorphism $\pi$ 
is fixed on a cone if there is a degree $b$ such that for all $x \gtt b$ we have $\pi (x) = x$.  We first prove that 
for every 2-generic real $X$ we have $X^\prime \not\leq_{tt} X \oplus 0^\prime$.  We next prove that for every real 
$X \gtt 0^\prime$ there is a real $Y$ such that $Y \oplus 0^\prime \ett Y^\prime \ett X$.  Finally, we use this to 
demonstrate that every automorphism of the truth-table degrees is fixed on a cone.
\end{abstract}

\section{Introduction}

The structure of the Turing degrees and the degrees of other reducibilities is a basic object in Computability 
Theory.  An important tool in studying the differences between degree structures is examining the properties of 
their automorphisms.  We begin with several definitions.  We view reals as elements of the Cantor space (infinite 
binary strings).

\begin{definition} A real $X$ is $n$-generic if for every $\Sigma_n$ set $S$ of strings either there is a 
number $l$ such that $X \rstrd l \in S$ or there is a number $l$ such that for every string $\tau \supseteq X \rstrd l$ we have 
$\tau \notin S$. \end{definition}

\begin{definition} $\Phi$ is a truth-table reduction if there is a computable function $f: \omega \to \omega$ such that 
for all $n \num$ and strings $\sigma$ of length $f(n)$ we have $\Phi^\sigma (n) \conv$.  We define $A \ltt B$ if there is 
a truth-table reduction $\Phi$ such that $\Phi (B) = A$. \end{definition}

\begin{definition} $A \leq_m B$ if there is a computable function $f:\omega \to \omega$ such that for all $n \num$ we have 
$x \in A \Leftrightarrow f(x) \in B$\@.  This is called many-one reducibility.  \end{definition}

\begin{definition} $A \leq_h B$ if $A$ is hyperarithmetic in $B$.  \end{definition}

We let $D$ be the set of Turing degrees and define \Dtt, $D_m$, and $D_h$ similarly.  We use $\leq_r$ and $D_r$ to 
denote an arbitrary reducibility and the corresponding set of degrees.  An automorphism of a degree structure is a bijection 
(a one-to-one and onto function) which preserves the order.

\begin{definition} An automorphism of $D_r$ is a bijection $\pi : D_r \to D_r$ such that for all degrees $x$ and $y$ 
we have $x \leq_r y \Leftrightarrow \pi (x) \leq_r \pi (y)$. \end{definition}

We say an automorphism $\pi$ is fixed on an (upper) cone if there is a degree $b$ such that for all $x \geq_r b$ we have
$\pi (x) = x$.  We call $b$ the base of the cone.

The truth-table degrees are often used in Computability Theory, especially in the areas of randomness and measure.  However, 
almost nothing is known about the automorphisms of the truth-table degrees.  Kjos-Hanssen \cite{Bjorn} has recently used 
lattice embeddings to obtain a result in a related area.  He showed that in the structure of the truth-table degrees with jump, 
($\Dtt$, $\ltt$, $^\prime$), every automorphism is fixed on the cone with base $0^{(4)}$.

In this paper we prove that every automorphism $\pi$ of the truth-table degrees is fixed on a cone.  The base of the cone is 
given by $d^{\prime \prime} \oplus \pi (d^{\prime \prime}) \oplus e^{\prime \prime} \oplus \pi^{- 1} (e^{\prime \prime})$ 
where $d = \pi^{-1} (0^{\prime \prime \prime})$ and $e = \pi (0^{\prime \prime \prime})$.  To show this, we prove that for 
every real $X \gtt 0^\prime$ there exists a real $Y$ such that $Y \oplus 0^\prime \ett Y^\prime \ett X$.  We will also prove 
that for every 2-generic real $X$ we have $X^\prime \not\leq_{tt} X \oplus 0^\prime$. 

To better understand the automorphisms of the truth-table degrees, we examine the automorphisms of other degree structures.  
From results known so far, it seems that the stronger the reducibility, the fewer the restrictions on the automorphisms of its 
degree structures.  At one extreme, the hyperdegrees are rigid; there is no nontrivial automorphism of $D_h$ \cite{hdegrees}.  

At the other end, there are very few restrictions on the automorphisms of the many-one degrees.  Odifreddi has proved that $0$ 
is the only $m$-degree fixed by every automorphism \cite{Odifreddi}.  Shore has shown that there are $2^{2^\omega}$ many 
automorphisms of the $m$-degrees \cite{Odifreddi}.  He has also noted that there is an automorphism of the $m$-degrees which is not 
fixed on any cone \cite{mdegrees}.  Combined with our main result, this provides a tangible difference between the 
automorphisms of the $m$-degrees and those of the $tt$-degrees.  

Our expectation is that the automorphisms of the $tt$-degrees will be more restricted than those of the $m$-degrees, but less 
restricted than those of the Turing degrees.   

There are several theorems which limit the possibilities for automorphisms of the Turing degrees.  Nerode and Shore 
\cite{cone} proved every automorphism of the Turing degrees is fixed on a cone.  Slaman and Woodin \cite{auto} 
improved this result by showing every automorphism is fixed on the cone with base $0^{\prime \prime}$.  They also proved 
that if $y$ is a Turing degree containing a 5-generic real and $\pi$ and $\rho$ are automorphisms such that 
$\pi (y) = \rho (y)$, then $\pi = \rho$.  Finally, Shore and Slaman \cite{jump} proved the Turing jump is definable in $D$, 
so $\pi (x^\prime) = (\pi(x))^\prime$ for every Turing degree $x$ and automorphism $\pi$.

The fundamental question ``Is there a nontrivial automorphism of the Turing degrees?'' is generally considered to be open as 
of this writing.  Cooper \cite{Cooper} has given an affirmative answer, but the proof has not yet been verified by leading 
experts.  Slaman and Woodin \cite{auto} have proved that the set of automorphisms is at most countable, and that the statement 
``There is a nontrivial automorphism of the Turing degrees'' is absolute between well-founded models of ZFC.

The author would like to thank Theodore Slaman and Antonin Ku\v{c}era for several helpful discussions.

\section{Background}

We wish to convert to the truth-table degrees the theorem of Nerode and Shore \cite{cone} that every automorphism of the Turing 
degrees is fixed on a cone.  Nerode and Shore proved truth-table and many-one analogues for many of their theorems on 
automorphisms.  They noted ``The general pattern of generalization is that if a proof for the Turing degrees does not use some form of the 
Friedberg Completeness Theorem then some version or other of the result holds for all of these reducibilities'' \cite{cone}.  In 
1984, Mohrherr \cite{ttdegrees} observed more specifically that proving every automorphism of the truth-table degrees is fixed on 
a cone required strong jump inversion of the truth-table degrees; for every real $X \gtt 0^\prime$ there is a real $Y$ such that 
$Y \oplus 0^\prime \ett Y^\prime \ett X$\@. 

Mohrherr \cite{ttdegrees} adapted Friedberg's proof of strong jump inversion of the Turing degrees to prove ordinary jump 
inversion of the truth-table degrees; for every real $X \gtt 0^\prime$ there is a real $Y$ such that $Y^\prime \ett X$\@.  
Friedberg's proof obtained strong jump inversion by using the fact that every 1-generic real $X$ is such that 
$X^\prime \leq_T X \oplus 0^\prime$.  To adapt this part of Friedberg's proof, we might hope to prove an analogous fact for 
the truth-table degrees; for every sufficiently generic real $X$, the statement $X^\prime \ltt X \oplus 0^\prime$ holds.  
Somewhat unexpectedly, we instead find that for every sufficiently generic real $X$, the statement fails.

\section{Strong jump inversion}

We prove that for every 2-generic real $X$ we have $X^\prime \not\leq_{tt} X \oplus 0^\prime$.  An intuitive explanation is 
that this holds because a truth-table computation of $X^\prime$ is bounded in its use of $X$, but the following is dense: the $n$ 
required such that $\{e\}^{X \smrstrd n}(e) \conv$ exceeds any fixed bound computable in $e$.

We give an outline of a proof by contradiction.  We suppose there is a truth-table reduction $\Phi$ with bound $f$ such that 
$\Phi (X \oplus 0^\prime) = X^\prime$ for some 2-generic real $X$\@.  We assume $f(n)$ is even for all $n$.  By genericity, there is 
an $l \num$ such that for every $\tau \supseteq X \rstrd l$ we never have 
$\Phi^{(\tau \oplus 0^\prime)\smrstrd f(n)} (n) = 0$ and $\{n\}^\tau (n) \conv$ for any $n$.  We can then use the Recursion Theorem to 
find an $m$ such that for any $\sigma$ we have $\{m\}^\sigma (m) \conv \Leftrightarrow \sigma (\half f(m) + 1) = 0$.  But the computation of 
$\Phi^{(\tau \oplus 0^\prime)\smrstrd f(m)} (m)$ can only use the first $\half f(m)$ many digits of $\tau$.  Hence, if $m$ is 
sufficiently large and $X(\half f(m) +1) = 1$ then $(X \rstrd \half f(m))\concat 0$ contradicts the statement given by genericity.

\begin{theorem} Let $X$ be a 2-generic real.  Then $X^\prime \not\leq_{tt} X \oplus 0^\prime$. \label{genlow} \end{theorem}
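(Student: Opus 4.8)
The plan is to argue by contradiction, making precise the heuristic from the outline above: a truth-table computation of $X'$ consults only a bounded initial segment of $X$, while the convergence of $\{n\}^X(n)$ can be \emph{forced}, via the Recursion Theorem, to hinge on a bit of $X$ lying beyond that bound. Suppose $\Phi$ is a truth-table reduction with computable bound $f$, which (as in the outline) we take to satisfy that $f(n)$ is even for every $n$, and suppose $\Phi(X\oplus 0')=X'$. Consider the set of strings
\[
S=\{\sigma : (\exists n)[\,|\sigma|\ge f(n)/2 \ \wedge\ \Phi^{(\sigma\oplus 0')\smrstrd f(n)}(n)=0 \ \wedge\ \{n\}^\sigma(n)\conv\,]\}.
\]
When $|\sigma|\ge f(n)/2$ the oracle string $(\sigma\oplus 0')\smrstrd f(n)$ is completely determined by $\sigma$ and $0'$ and $\Phi$ converges on it, so ``$\Phi^{(\sigma\oplus 0')\smrstrd f(n)}(n)=0$'' is $\Delta^0_2$ uniformly in $\sigma,n$; the other conjuncts being $\Sigma^0_1$ and computable, $S$ is $\Sigma^0_2$. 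Moreover $X$ meets no initial segment of $S$: if $X\rstrd k\in S$ via some $n$, then $k\ge f(n)/2$ forces $((X\rstrd k)\oplus 0')\smrstrd f(n)=(X\oplus 0')\smrstrd f(n)$, so $\Phi^{(X\oplus 0')\smrstrd f(n)}(n)=0$, i.e.\ $\{n\}^X(n)\dive$, contradicting $\{n\}^{X\rstrd k}(n)\conv$. Hence, by $2$-genericity, there is an $l$ with $\tau\notin S$ for every $\tau\supseteq X\rstrd l$.

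Next I would apply the Recursion Theorem with parameters to obtain, for each $j\num$, an index $m_j$ such that, with $p_j:=\max\big(f(m_j)/2,\,l,\,j\big)$,
\[
\{m_j\}^\sigma(m_j)\conv \iff |\sigma|>p_j \ \wedge\ \sigma(p_j)=0 .
\]
The three arguments of the $\max$ serve distinct purposes: $p_j\ge f(m_j)/2$ puts the tested coordinate beyond anything the truth-table computation $\Phi^{(\sigma\oplus 0')\smrstrd f(m_j)}(m_j)$ can see of $\sigma$; $p_j\ge l$ keeps us inside the cone of strings extending $X\rstrd l$; and $p_j\ge j$ makes the positions $p_j$ unbounded. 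Now fix $j$ and set $\tau_j:=(X\rstrd p_j)\concat 0$. Then $\tau_j\supseteq X\rstrd l$, and $\{m_j\}^{\tau_j}(m_j)\conv$ because $\tau_j(p_j)=0$; since $\tau_j\notin S$, this forces $\Phi^{(\tau_j\oplus 0')\smrstrd f(m_j)}(m_j)\ne 0$, hence it equals $1$. But that computation consults only the first $f(m_j)/2$ bits of $\tau_j$, and $\tau_j\rstrd(f(m_j)/2)=X\rstrd(f(m_j)/2)$, so it agrees with $\Phi^{(X\oplus 0')\smrstrd f(m_j)}(m_j)=X'(m_j)$. Thus $X'(m_j)=1$, i.e.\ $\{m_j\}^X(m_j)\conv$, which by the displayed equivalence gives $X(p_j)=0$.

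As $j$ was arbitrary, $X$ is identically $0$ on the infinite, computably enumerable set $P=\{p_j : j\num\}$. This already contradicts $1$-genericity: $X$ meets no initial segment of the $\Sigma^0_1$ set $\{\sigma : (\exists k\in P)[\,k<|\sigma| \wedge \sigma(k)=1\,]\}$, yet it cannot avoid this set past any $l_1$ either, since for $p\in P$ with $p\ge l_1$ the string $(X\rstrd p)\concat 1$ both extends $X\rstrd l_1$ and belongs to it. This contradiction proves $X'\not\ltt X\oplus 0'$.

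The genuine obstacle here is the move from a single Recursion-Theorem index to infinitely many. With one index $m$ the argument stalls precisely when $\Phi^{(X\oplus 0')\smrstrd f(m)}(m)=1$: then the obvious candidate $(X\rstrd p)\concat 0$ does not enter $S$, and nothing contradicts $2$-genericity. Running the construction uniformly over $j$, with the tested bit parked at the unbounded position $p_j$, converts each such stall into the datum $X(p_j)=0$; the eventual contradiction is then drawn not from $2$-genericity but from the elementary fact that a $1$-generic real cannot be constant on an infinite c.e.\ set of coordinates. The remaining points — verifying that $S$ is no worse than $\Sigma^0_2$, so that $2$-genericity suffices, and that $p_j$ genuinely exceeds the use of the truth-table computation at $m_j$ — are routine by comparison.
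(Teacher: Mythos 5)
Your proof is correct and takes essentially the same approach as the paper: define the $\Sigma^0_2$ set $S$ of strings that would witness a failure of $\Phi(\cdot\oplus 0')=(\cdot)'$, use $2$-genericity to get $l$ with no extension of $X\rstrd l$ in $S$, and then use the Recursion Theorem to plant a test bit just past the truth-table use and flip it. The only difference is organizational: the paper produces an infinite computable set $M$ of fixed points, invokes $1$-genericity once more (its Claim) to find a single $v\in M$ with $X$'s test bit equal to $1$, and then derives the contradiction directly, whereas you run the Recursion Theorem with a parameter $j$ to force the test positions $p_j$ to be unbounded, derive $X(p_j)=0$ for every $j$, and contradict $1$-genericity at the end — the same argument read in the contrapositive direction.
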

\begin{proof} Suppose not.  Let $\Phi$ be a truth-table reduction with bound $f$ such that 
$\Phi (X \oplus 0^\prime) = X^\prime$.  Without loss of generality, let $f(n)$ be even for all $n$.  

We define a set $S$ of strings $\sigma$ which witness a failure of $\Phi (A \oplus 0^\prime) = A^\prime$ for every 
$A \supseteq \sigma$.
$$S = \{ \sigma \in \strings \setsep \exists n [\Phi^{(\sigma \oplus 0^\prime)\smrstrd f(n)} (n) = 0 \ \wedge\ \{n\}^\sigma (n) \conv] \}.$$
$X$ does not meet $S$ because $\Phi (X \oplus 0^\prime) = X^\prime$.  We note $S$ is $\Sigma_1 (0^\prime)$ so $S$ is $\Sigma_2$.  
Since $X$ is 2-generic, there is a $l$ such that for every $\tau \supseteq X \rstrd l$ we have $\tau \notin S$.

Define $\{j(y)\}^\sigma (n) = 0$ if $\sigma (\half f(y) +1) = 0$ and $\{j(y)\}^\sigma (n) \dive$ otherwise.  By the Recursion Theorem, 
there is an infinite computable set $M$ such that for every $m \in M$ we have $\{j(m)\} = \{m\}$.  

\begin{claim} $\exists v \in M$ such that $\half f(v) > l$ and $X(\half f(v) +1) = 1$. \end{claim}
\begin{proof} Let $V = \{\sigma \in \strings \setsep \exists m \in M \, [\half f(m) >l \ \wedge\ \sigma (\half f(m) + 1) = 1]\}$.  
Suppose $X$ does not meet $V$.  Then since $X$ is 2-generic there is a $k$ such that for every $\gamma \supseteq X \rstrd k$ we 
have $\gamma \notin V$.  But $(X \rstrd k)\concat 1^j \in V$ for any sufficiently large $j$, for a contradiction.  Therefore we 
have $X \rstrd k \in V$ for some $k$.  We then let $v$ be the witness that $X \rstrd k \in V$.
\end{proof}

Let $v$ be given by the claim.  Since $v \in M$ and $X(\half f(v) +1) = 1$ we have $\{v\}^X (v) = \{j(v)\}^X (v) = \,\uparrow$.  Hence 
$v \notin X^\prime$.  Thus $\Phi^{(X \oplus 0^\prime)\smrstrd f(v)} (v) = 0$.  Let $\tau = (X \rstrd \half f(v))\concat 0$.  Since 
$\half f(v) > l$ we have $\tau \supseteq X \rstrd l$ so $\tau \notin S$.  We already have $\Phi^{(X \oplus 0^\prime)\smrstrd f(v)} (v) = 0$ 
so we must conclude $\{v\}^\tau (v) \dive$.  But since $v \in M$ and $\tau (\half f(v) +1)=0$ we have 
$\{v\}^\tau (v) = \{j(v)\}^\tau (v) = 0$ for a contradiction.
\end{proof}

Kjos-Hanssen \cite{Bjorn2} has noted that Mohrherr's \cite{ttdegrees} construction for $0^\prime$ yields a 1-generic real $X$ such that 
$X^\prime \ltt X \oplus 0^\prime$.  Hence this result is sharp.

The proof of Theorem \ref{genlow} illustrates some of the difficulties in proving strong jump inversion for the truth-table degrees.  
When constructing a real $Y$ such that $Y^\prime \ltt Y \oplus 0^\prime$ we must keep making choices about the values that 
$Y^\prime$ will take, before we are able to extend $Y$ to confirm these choices.  As a result, we need a way to find choices which 
maximize the possibilities available to us later.  Our original proof did this by using a ``bushy'' structure similar to those developed 
by Kumabe and Lewis \cite{KumLew}.  Ku\v{c}era \cite{PAidea} suggested that switching to a PA (binary valued diagonally noncomputable) 
approach would greatly simplify the proof's bookkeeping.

Following the notation in \cite{DNR}, we define PA $= \{f \in \reals \setsep \forall x [f(x) \not= \{x\}(x)]\}$.  Ku\v{c}era and 
Slaman \cite{DNR} have shown how PA can be used as a ``universal'' $\Pi^0_1$ class which can easily be used to code information.  We start 
with some definitions.  Let $f \in \reals$, $M$ be an infinite set, and $\langle m_i \setsep i \num \rangle$ be an increasing enumeration 
of $M$.  We define Restr $(f,M)$ to be the function $g$ where $g(i) = f(m_i)$.  If we think of $M$ as a set of locations where information 
is coded, then Restr $(f,M)$ is the real coded into $f$.  For $\sigma \in \strings$ we define Restr $(\sigma, M)$ as a string $\tau$ where 
$\tau (i) = \sigma (m_i)$ for all $i$ where $m_i$ is less than the length of $\sigma$.  Finally, for $B \subseteq \reals$ we define 
Restr $(B,M) = \{g \in \reals \setsep \exists f \in B \, [g = \mbox{Restr}(f,M)]\}$.

The following theorem allows us to uniformly shrink a $\Pi^0_1$ subclass while still recursively coding information. 

\begin{theorem}[Ku\v{c}era and Slaman \cite{DNR}] Let $B \subseteq$ PA be a $\Pi^0_1$ class.  Then there is an infinite computable set $M$ 
such that if $B$ is nonempty then Restr $(B,M) = \reals$.  Moreover, we can (uniformly) computably find an index for $M$ from an index 
for $B$. \label{KSresult} \end{theorem}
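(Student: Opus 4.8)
The plan is to reduce the statement to a combinatorial property of a tree presenting $B$ and then obtain that property by a Recursion Theorem construction. Fix a computable tree $T$ with $[T] = B$; since $B \subseteq$ PA we may take $T \subseteq T_{\mathrm{PA}}$, the computable tree of strings $\sigma$ such that $\sigma(x) \neq \{x\}(x)$ for every $x < |\sigma|$ with $\{x\}(x)\conv$. Recall also that the pruned tree $T' = \{\sigma \in T \setsep \sigma\text{ is extendible in }T\}$ is $\Pi^0_1$ — its complement is c.e., since by K\"onig's Lemma non-extendibility of $\sigma$ means that some level of the subtree above $\sigma$ is empty — and this is uniform in an index for $T$, with $[T'] = B$. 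The first step is then the easy observation that it suffices to produce a computable set $M = \{m_0 < m_1 < \cdots\}$, with an index computable from that of $B$, having the following \emph{persistent branching} property: for all $i$ and all $\tau \in 2^i$ (the strings of length $i$), if $B_\tau := \{f \in B \setsep f(m_j) = \tau(j)\text{ for all }j < i\}$ is nonempty, then both $B_{\tau\concat 0}$ and $B_{\tau\concat 1}$ are nonempty. Indeed, if $B \neq \emptyset$ then $B_{\langle\rangle} = B \neq \emptyset$, so by induction $B_\tau \neq \emptyset$ for every $\tau \in \strings$; for any $A \in \reals$ the classes $B_{A\smrstrd n}$ then form a decreasing chain of nonempty closed subsets of the compact space $\reals$, so their intersection contains some $f$, and this $f$ satisfies $\mathrm{Restr}(f, M) = A$. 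Hence $\mathrm{Restr}(B, M) = \reals$.

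The second step is to construct $M$ with the persistent branching property, which I would do by the Recursion Theorem, enumerating $M$ in stages without ever needing to know whether $B$ is empty. Having committed $m_0 < \cdots < m_{i - 1}$, I want to commit $m_i$ so that no nonempty $B_\tau$, for $\tau \in 2^i$, is decided at coordinate $m_i$. I would take $m_i$ to be a fresh index and use the Recursion Theorem to define the program $\{m_i\}$, together with any auxiliary indices needed, so that it \emph{threatens} $B$: the program watches the co-c.e. approximation to $T'$, and if it ever sees, for some $\tau \in 2^i$ that still looks inhabited, that every $\tau$-extension surviving in $T'$ at level $m_i + 1$ assigns the same value $b$ at $m_i$, it converges with a value chosen so that the resulting diagonal constraint, together with the observed shrinking of $T'$, forces $B_\tau$ to be empty. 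This is exactly where $B \subseteq$ PA is used: a diagonal constraint deletes precisely the half of the tree carrying the offending value at $m_i$, so an apparent collapse of a $\Pi^0_1$ subclass can be promoted to genuine emptiness. Consequently, on the hypothesis that $B_\tau$ is nonempty the threat is never triggered, i.e. $B_\tau$ branches at $m_i$ — provided the threats can be arranged to cover all $\tau \in 2^i$ at once, which is the delicate point discussed below. If $B = \emptyset$ the construction still runs and outputs an infinite computable $M$, and the conclusion is vacuously true; the index of $M$ is read off the fixed point, uniformly in an index for $B$.

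The step I expect to be the main obstacle is making the threat at $m_i$ enforce branching \emph{simultaneously} for all $\tau \in 2^i$. A diagonal constraint at $m_i$ is all-or-nothing — it deletes one value of $f(m_i)$ throughout $B$ — so a crude threat that merely reacts to the first apparent collapse can empty one $B_\tau$ while destroying branching for another, still nonempty $B_{\tau'}$. Overcoming this requires the full self-referential strength that makes PA a ``universal'' $\Pi^0_1$ class: the markers must be arranged to behave like jointly independent Rosser sentences. Concretely, I would translate the problem into one about completions of a consistent c.e. extension $U$ of first-order Peano arithmetic (using that PA is the class of $\{0,1\}$-valued diagonally noncomputable functions), take the $m_i$ to be codes, in a fixed enumeration of sentences, of diagonally built sentences $\psi_i$ such that no consistent combination of $U$ with the earlier $\psi_j$ decides $\psi_i$, and check that this construction is uniform and survives $U$ being inconsistent. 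Verifying this joint independence and its uniformity is the technical heart of the argument; the reduction of the first paragraph, the compactness step, and the bookkeeping for the index of $M$ are then routine.
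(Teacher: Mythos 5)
This statement is Theorem~\ref{KSresult}, which the paper does \emph{not} prove; it is quoted as a result of Ku\v{c}era and Slaman \cite{DNR}, so there is no ``paper's own proof'' to compare against. Judged on its own, your proposal gets the framing right and correctly isolates where the real work lies, but it does not actually close that gap. The first paragraph is sound: reducing to the ``persistent branching'' property of $M$ and then using compactness (the nested nonempty closed sets $B_{A\smrstrd n}$) to recover $\mbox{Restr}(B,M)=\reals$ is a correct and clean reduction, and the remark about the pruned tree $T'$ being uniformly $\Pi^0_1$ is fine.

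The gap is exactly the one you flag in your third paragraph, and it is not a small finishing detail --- it is the entire content of the theorem. A single Recursion-Theorem threat at $m_i$ can only delete one value of $f(m_i)$ across all of PA, so when two inhabited subclasses $B_\tau$ and $B_{\tau'}$ force opposite values at $m_i$, firing the threat kills one and \emph{certifies} the other's failure to branch; the naive threat therefore does not establish persistent branching for $i\geq 1$. Your proposed repair --- transporting the problem to completions of a c.e.\ extension $U$ of Peano arithmetic and building ``jointly independent Rosser sentences'' --- is only gestured at. You neither carry out the translation (an arbitrary $\Pi^0_1$ subclass of PA is not presented as a theory extending $U$, so one needs an explicit, uniform passage between the tree for $B$ and a recursively axiomatized theory whose completions realize $B$) nor prove the joint-independence lemma itself, which is not the one-theory Rosser argument but a simultaneous diagonalization against up to $2^i$ theories at once (or, back on the tree side, a finite-injury cascade of threats with a computable bound). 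You are explicit that this is ``the technical heart'' and that you have not verified it; as it stands the proposal reduces the theorem to an unproved lemma that is essentially equivalent to the theorem. To complete it you would need to either prove that joint-independence lemma (with the uniformity required to read off an index for $M$), or replace it with a direct construction, e.g.\ reserving roughly $2^i$ fresh Recursion-Theorem indices at level $i$ and running a bounded finite-injury argument in which each fired threat certifiably empties a distinct $B_\tau$, so that after at most $2^i$ firings the surviving location cannot be decided by any inhabited $B_\tau$; the computable bound $2^i$ is what makes $M$ computable and the index uniform.
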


For $T \subseteq \strings$ a tree, let $[T]$ denote the set of paths through $T$.  For an index of a 
$\Pi^0_1$ tree $T$ we use a $n$ such that $T = \strings \setminus W_n$ (we view $W_n$ as the $n$-th c.e.\ set of strings).  An index of a 
$\Pi^0_1$ class $[T]$ is an index for $T$.  For a tree $T$ and a string $\sigma$ we define 
$T[\sigma] = \{\tau \in T \setsep \tau \supseteq \sigma\ \vee\ \tau \subseteq \sigma \}$ to be the part of $T$ compatible with the root 
$\sigma$.

We outline the proof of strong jump inversion for the truth-table degrees.  Given $X \gtt 0^\prime$ we construct a $Y$ such that 
$X \ltt Y \oplus 0^\prime$ and $Y^\prime \ltt X$.  We build $Y$ as a path through a shrinking set of trees $T_i$.  If $i \in A^\prime$ for 
every path $A \in [T_i]$ then we let $T_{i+1} = T_i[Y_i]$.  If not, we obtain $T_{i+1}$ by removing from $T_i[Y_i]$ every point which forces 
$i \in Y^\prime$.  In either case, we force a value for $Y^\prime (i)$.  

The key idea is that Theorem \ref{KSresult} guarantees that the resulting tree will be thick enough to code with.  In this way the Theorem 
internalizes the role of a ``bushy'' condition which could be used to ensure that the chosen tree is sufficiently thick (as in \cite{KumLew}).  
We can then extend $Y$ to record the value of $X(i)$ at the next coding location and proceed to the next stage.  To prove that we can calculate 
$X$ or $Y^\prime$ by truth-table reductions, we need to obtain a computable bound on the use of $Y \oplus 0^\prime$ or $X$ needed to reach a 
given stage of the construction.  We do this by exhaustively considering all possible values for the inputs and parameters. 

\begin{theorem} Let $X$ be a real such that $X \gtt 0^\prime$.  Then there exists a real $Y$ such that 
$Y \oplus 0^\prime \ett Y^\prime \ett X$. \label{main} \end{theorem}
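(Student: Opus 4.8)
The plan is to prove the theorem by constructing, for the given $X \gtt 0^\prime$, a real $Y$ with $X \ltt Y \oplus 0^\prime$ and $Y^\prime \ltt X$. This suffices, since $Y \leq_m Y^\prime$ is immediate and $0^\prime \leq_m Y^\prime$ is the standard fact that $\emptyset^\prime$ lies $m$-below every jump, so $Y \oplus 0^\prime \ltt Y^\prime$ for free, and then $Y^\prime \ltt X \ltt Y \oplus 0^\prime \ltt Y^\prime$ gives $Y \oplus 0^\prime \ett Y^\prime \ett X$.

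I would build $Y$ as a path through a shrinking sequence of $\Pi^0_1$ subclasses of PA, maintaining at stage $i$ a finite string $Y_i \subset Y$ and a $\Pi^0_1$ tree $T_i$ with $Y_i$ extendible in $T_i$ and $[T_i] \subseteq$ PA, starting from $T_0$ the canonical tree for PA. At stage $i$ I would first force $Y^\prime(i)$: by compactness, the statement ``$\{i\}^A(i) \conv$ for every $A \in [T_i[Y_i]]$'' is $\Sigma^0_1$ in $i$, so $0^\prime$ decides it with one query; if it holds, put $T_{i+1}^- = T_i[Y_i]$, which forces $i \in Y^\prime$, and otherwise let $T_{i+1}^-$ be $T_i[Y_i]$ with every node $\sigma$ for which $\{i\}^\sigma(i) \conv$ (use $\leq |\sigma|$) and its extensions deleted --- still a nonempty $\Pi^0_1$ class, as a path witnessing divergence survives, now forcing $i \notin Y^\prime$. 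Then I would code $X(i)$: apply Theorem \ref{KSresult} to $[T_{i+1}^-]$ to get, uniformly from its index, a computable set $M$ with $\mathrm{Restr}([T_{i+1}^-], M) = \reals$; with $m$ the least element of $M$ satisfying $m \geq |Y_i|$, this equation yields a string $Y_{i+1} \supseteq Y_i$ of length $m+1$ with $Y_{i+1}(m) = X(i)$ that is extendible in $T_{i+1}^-$, and $0^\prime$ can locate it because extendibility in a $\Pi^0_1$ class is $\Pi^0_1$. Set $T_{i+1} = T_{i+1}^-[Y_{i+1}]$, record the coding location $c_i = m$, and proceed; then $Y = \bigcup_i Y_i$ lies in $\bigcap_i [T_i] \subseteq$ PA, so $Y$ is total and $Y^\prime(i)$ is the value forced at stage $i$.

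The remaining point, and the heart of the argument, is that the two reductions are truth-table, not merely Turing. Each oracle query the construction makes is a single bit of $0^\prime$ --- the $\Sigma^0_1$ query above, or an extendibility query --- made at a position computable from the finitely many bits ($0^\prime$-answers together with the coded bits $X(0),\dots,X(j-1)$) already revealed. So if one fixes a ``transcript'' of all such bits, the entire construction through stage $i$ --- every index for $T_j$, every $M$, every $c_j$, and the stage at which $Y^\prime(i)$ is forced --- becomes a purely computable process, and since each stage contributes a number of transcript-bits that is computable from the transcript so far, there are only finitely many transcripts through stage $i$; the maximum over them of the relevant quantities is a computable $f(i)$. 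Routing the $0^\prime$-queries through a truth-table reduction $0^\prime \ltt X$ (which exists because $X \gtt 0^\prime$) and reading the coded bits directly off $X$ then computes $Y^\prime(i)$ from $X$ with use bounded by a computable function of $i$; answering the $0^\prime$-queries directly and reading the coded bits off $Y$ computes $X(i)$ from $Y \oplus 0^\prime$ with computably bounded use.

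I expect this last step to be the main obstacle: the construction is genuinely driven by $0^\prime$, and its search operations (deciding emptiness of $\Pi^0_1$ classes, locating extendible nodes) carry no a priori computable bound, so what must be shown is that reaching stage $i$ needs only finitely many one-bit $0^\prime$-queries whose positions depend on a bounded amount of already-revealed data --- this is the ``exhaustively considering all possible values for the inputs and parameters'' step of the outline, and it is what upgrades the reductions from Turing to truth-table. The role of Theorem \ref{KSresult} is the key structural device: it keeps the $\Pi^0_1$ class thick enough to keep coding $X$ no matter how often the jump requirements have forced a shrink.
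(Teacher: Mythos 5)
Your proposal is correct and is essentially the proof the paper gives: a PA-subclass forcing construction driven by $0^\prime$, with Theorem~\ref{KSresult} used to preserve a coding location at each stage, and truth-table reductions obtained by observing that the number of one-bit oracle queries at each stage is computable from the data already revealed, so a bound $f(i)$ can be computed by taking maxima over the finitely many possible transcripts. The paper simply makes this last step concrete by writing out, via simultaneous recursion, explicit bounding functions $t,l,g$ for the tree indices, the lengths of the $Y_i$, and the use of $0^\prime$, and by spelling out the ``error state'' behavior for unexpected oracle values, but these are just the details of the exhaustive maximization you describe.
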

\begin{proof} We construct the real $Y$ in stages.  For bookkeeping, we will also use $\Pi^0_1$ trees $T_i$ for $i \num$.  At each stage we 
will have $Y_{i+1}$ extend $Y_i$, $T_{i+1} \subseteq T_i$, and $Y_i \in T_i$.  We start with $Y_0 = \langle \rangle$ and $T_0 =$ PA.

At stage $i$ we use $0^\prime$ to determine if there is a number $d$ such that for all strings $\sigma$ of length $d$ extending $Y_i$ we have 
$\sigma \notin T_i$ or $\{i\}^\sigma (i) \conv$.  This is a $\Sigma^0_1$ question.  If the answer is yes we let $T_{i+1} = T_i[Y_i]$.  We note 
that since we will have $Y \in [T_{i+1}]$, we have forced $i \in Y^\prime$.  

If the answer is no we let $T_{i+1} = T_i[Y_i] \setminus \{\sigma \in \strings \setsep \{i\}^\sigma (i) \conv \}$.  We note that 
$T_{i+1}$ is $\Pi^0_1$ and we have forced $i \notin Y^\prime$. 

In either case, by compactness and our choice of $Y_i$, we have that $[T_{i+1}]$ is nonempty.  Let $M$ be the set given by 
Theorem \ref{KSresult} for $[T_{i+1}]$ and let $m$ be the least element of $M$.  We then use $0^\prime$ to define $Y_{i+1}$ to be the leftmost 
string $\sigma \in T_{i+1}$ of length $m$ such that $\sigma (m) = X (i)$ and $\big[T_{i+1}[\sigma]\big]\not= \emptyset$.  Such a $\sigma$ 
exists by our choice of $m$.  This completes stage $i$.

It is clear that this construction can be done computably in $X$ (since $X \gtt 0^\prime$).  For every $i \num$ we can determine if 
$i \in Y^\prime$ from the $i$-th stage of the construction.  Hence $Y^\prime$ is computable in $X$.  We can also see that given $Y$ 
and $0^\prime$ we can use the construction to find $X$.  At the end of the $i$-th stage we read $Y(m)$ to find $X(i)$.  Thus 
$Y \oplus 0^\prime \equiv_T Y^\prime \equiv_T X$.  

To obtain truth-table reductions we must use computations which are total.  If an unexpected input is used for $0^\prime$ in the 
computation of $Y$ then at some stage the computation may find no possible extension for $Y_{i+1}$.  To fix this, we define 
the computation so that it ceases normal operations and outputs an infinite string of 0's if this occurs.  Since at any stage there 
are only finitely many possibilities for $Y_{i+1}$, the computation will realize an error state has been reached.  We define
the computation of $X$ on unexpected inputs for $Y$ or $0^\prime$ similarly.  It remains to show we can bound the use on these 
computations (for expected and unexpected inputs).

We first define several computable functions.  Let $m$ be the index of a $\Pi^0_1$ tree $T$\@.  For $i \num$ we define 
$u(m,i)$ to be the index of $T \cap \{\tau \in \strings \setsep \{i\}^\tau (i) \dive \}$.  For $\sigma$ a string we let $q(m,\sigma)$ 
be the index of $T[\sigma]$.  Using Theorem \ref{KSresult}, let $s$ be a computable function such that if $i$ is the index for a tree 
then $s(i)$ is the least element of the corresponding set $M$.  We note that Theorem \ref{KSresult} still gives the index of a 
(meaningless) total, infinite computable set if the $\Pi^0_1$ class is empty.  Hence $s$ is total.

We define computable functions $t$ and $l$ to bound the indices of the $T_i$ and the lengths of the $Y_i$, respectively.  These 
functions are defined by simultaneous induction starting with $t(0)$ equal to the index of PA and $l(0) = 0$.  For the inductive 
step, we define $t(i+1)$ and $l(i+1)$ to be the largest possible result for any values used in the roles of $Y^\prime \rstrd i$, 
$X \rstrd i$, and for $j \leq i$, $T_j$ and $Y_j$.  Hence we first define
$$t(i+1) = \max_{\sigma \in 2^{\leq l(i)},\ e \leq t(i)}\Big(\max(q(e,\sigma),q(u(e,i),\sigma))\Big).$$  We then let 
$l(i+1) = \max_{e \leq t(i+1)} (s(e))$ to complete the induction.

We next wish to find a computable function $g$ to bound the amount of $0^\prime$ used in steps of the construction.  Let 
$h: \strings \times \omega \times \omega \to \omega$ be a computable function such that $h(\tau,n,i) \in 0^\prime$ iff there is a 
number $d$ such that for all strings $\sigma$ of length $d$ extending $\tau$ we have $\sigma \in W_n$ or $\{i\}^\sigma (i) \conv$.  
Similarly, let $j: \strings \times \omega \to \omega$ be a computable function such that $j(\tau,n) \in 0^\prime$ iff there is a 
number $d$ such that for all strings $\sigma$ of length $d$ extending $\tau$ we have $\sigma \in W_n$.  We then let 
$$g(i) = \max_{\tau \in 2^{\leq l(i+1)},\ m \leq i,\ n \leq t(i+1)} \Big(\max(h(\tau,n,m),j(\tau,n))\Big)$$
so the construction uses at most $0^\prime \rstrd g(i)$ for step $i$.  

We can now complete the proof.  In calculating $Y^\prime (n)$ from $X$ (and $0^\prime$), we need to reach the $n$-th step of the 
construction.  This requires the first $n$ digits of $X$ and $0^\prime \rstrd g(i)$.  Hence if $f$ witnesses that $0^\prime \ltt X$, then we need 
$X \rstrd \max(n,f(g(n)))$ to calculate $Y^\prime (n)$.  Thus we have a computable bound for the use of $X$.  Similarly, 
in calculating $X(n)$ from $Y$ and $0^\prime$ we need $Y \rstrd l(n+1)$ and $0^\prime \rstrd g(n)$, so we have computable bounds on the use of 
$Y$ and $0^\prime$.

These bounds also work for unexpected inputs since they use the largest result for all possible values of the inputs for $X$, $Y$, and 
$0^\prime$.  If an error state is reached then the use no longer increases, so it remains below the bounds.  Therefore, 
$Y \oplus 0^\prime \ett Y^\prime \ett X$. 
\end{proof}

We wish to prove a relativized form of this theorem; given reals $A$ and $X \gtt A^\prime$ there is a real $Y \gtt A$ such that 
$X \ett Y^\prime \ett Y \oplus A^\prime$.  The difficulty is showing that the bounding functions for the truth-table 
reductions are computable (instead of just computable in $A$).  To do this we use a powerful relativized form of Theorem 
\ref{KSresult}

\begin{theorem}[Ku\v{c}era and Slaman \cite{DNR}] Let $A$ be a real and $B \subseteq$ PA($A$) be a $\Pi_1^{0, A}$ class.  Then there 
is an infinite computable set $M$ such that if $B$ is nonempty then Restr $(B,M) = \reals$.  Moreover, an index of $M$ can be 
found (uniformly) computably from an index of B, i.e.\ in a uniform way which does not depend on the oracle $A$\@. \label{KSrel} 
\end{theorem}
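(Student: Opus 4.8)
The plan is to re-run the Ku\v{c}era--Slaman coding argument behind Theorem~\ref{KSresult} with the oracle $A$ carried through every step, and to verify that the set $M$ it produces, and an index for $M$, are extracted from an index for $B$ without ever consulting $A$. Fix an index $n$ with $B = [T]$ for the $\Pi_1^{0,A}$ tree $T = \strings \setminus W_n^A$. Call a node $\rho \in T$ \emph{extendible} if $[T[\rho]] \neq \emptyset$; this is a $\Pi_1^{0,A}$ property of $\rho$, so it cannot be tested directly, and sidestepping that fact while staying oracle-uniform is the whole game.

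The mathematical heart is a branching lemma: if $\rho$ is extendible then there are $m > |\rho|$ and extendible $\tau_0, \tau_1 \supseteq \rho$ in $T$ with $\tau_0(m) \neq \tau_1(m)$, and in fact infinitely many such $m$ lie above $\rho$. I would prove this by contraposition. If no such $m$ exists, then at every level above $|\rho|$ all extendible extensions of $\rho$ agree, so $[T[\rho]]$ is a singleton $\{g\}$; but the unique path of a $\Pi_1^{0,A}$ class is computable in $A$, while $g \in B \subseteq \mathrm{PA}(A)$ forces $g$ to be a (binary valued) diagonally noncomputable function relative to $A$, and no total $A$-computable function can be diagonally noncomputable relative to $A$. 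This contradiction relativizes the corresponding step of Theorem~\ref{KSresult} verbatim; iterating inside $[T[\tau_0]]$ and $[T[\tau_1]]$ gives the ``infinitely many'' clause.

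With the branching lemma in hand I would build $M = \{m_0 < m_1 < \cdots\}$ exactly as in the proof of Theorem~\ref{KSresult}: recursively maintain a finite system of extendible witness nodes recording every bit-pattern seen so far at $m_0, \dots, m_{i-1}$, use the branching lemma to choose the next coding location $m_i$ together with refined witnesses, and --- rather than test the co-c.e.-in-$A$ predicate ``extendible'' --- route the branching decisions through the DNC machinery. Concretely, the standard fact that every $f \in \mathrm{PA}(A)$ computes a path through $T$ via a \emph{fixed} functional $\Psi$ is what is used: its index is obtained from $n$ alone, because the auxiliary programs it needs are produced by the recursion theorem relative to $A$ in a way that never mentions $A$, and the ``free'' queries it makes are made to indices of programs that diverge on their own index under every oracle (obtained by padding, so forming a computable set independent of $A$, and imposing no constraint on members of $\mathrm{PA}(A)$). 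Arranging $\Psi$ so that at step $i$ it branches according to one such free bit of $f$, one then gets: for any $r \in \reals$ there is $f \in \mathrm{PA}(A)$ --- existing because $\mathrm{PA}(A) \neq \emptyset$ and the free indices carry no DNC constraint --- with $f$ feeding the value $r(i)$ into the $i$-th branching; then $\Psi^f \in B$ and Restr$(\Psi^f, M) = r$, so Restr$(B,M) = \reals$.

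The step I expect to be the main obstacle is the oracle-uniformity, not the mathematics, which is already present relativizably in Theorem~\ref{KSresult}. Every ingredient that defines $M$ --- the witness system, the coding locations, the functional $\Psi$ --- must be computed from an index for $B$ with $A$ an inert passenger, even though the only information about $T$ we can effectively access is its co-c.e.-in-$A$ stagewise approximation. The resolution is the same one that makes Theorem~\ref{KSresult} work unrelativized: never decide extendibility directly, push all non-oracle nonuniformity into never-halting programs, and let the membership-of-$\mathrm{PA}(A)$ constraint on $f$ select the correct branch after the fact; the only new point is that each invocation of the recursion theorem and of the padding lemma is carried out relative to $A$ in the oracle-independent form, so that the final index for $M$ depends on nothing but the index for $B$.
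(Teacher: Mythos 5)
The paper does not actually prove this statement; it is imported verbatim from Ku\v{c}era and Slaman \cite{DNR}, so your proposal has to be measured against their argument rather than against anything in the text. Your branching lemma is true, and your closing slogan (never test extendibility, let the $\mathrm{PA}(A)$ constraint select the branch after the fact) is the right one, but the concrete mechanism you give for producing $M$ and verifying $\mbox{Restr}(B,M)=\reals$ does not work. You put the freedom into an auxiliary $f\in \mathrm{PA}(A)$ at indices $d_i$ of programs that diverge on their own index under every oracle, and then push $f$ through a functional $\Psi$ with $\Psi^f\in B$. Two things go wrong. First, the positions at which $\Psi^f$ varies as the free bits of $f$ vary are precisely the branching levels of $T$ --- exactly the non-computable data your branching lemma only proves to exist --- so $\mbox{Restr}(\Psi^f,M)=r$ does not follow for any computable $M$; knowing $\mbox{Restr}(f,\{d_i \setsep i\num\})=r$ says nothing about the bits of $\Psi^f$ on a computable set. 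Second, a set of ``free'' indices chosen independently of $B$ (as yours is: it comes from padding alone) cannot serve as coding locations in $B$: members of $\mathrm{PA}(A)$ are unconstrained at $d_0$, but members of the subclass $B=\{f\in \mathrm{PA}(A) \setsep f(d_0)=0\}$ are not, and this $B$ defeats any candidate $M$ containing $d_0$. The coding locations must depend on the index of $B$, which is why the theorem bothers to assert that they are computed from it.

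The actual mechanism is self-referential in a way your write-up inverts. At stage $i$ one has finitely many subclasses $B_\sigma=\{f\in B \setsep f(m_j)=\sigma(j) \mbox{ for all } j<i\}$, and by the recursion theorem (in its oracle-uniform form) one produces a single index $e>m_{i-1}$ such that $\{e\}^A(e)$ searches for a stage at which some $\{f\in B_\sigma \setsep f(e)=k\}$ is seen (by compactness, this is c.e.\ in $A$) to be empty, and on finding one outputs $1-k$. Since $B_\sigma\subseteq \mathrm{PA}(A)$, convergence to $1-k$ would force $f(e)=k$ for every $f\in \mathrm{PA}(A)$ and hence would empty $B_\sigma$ outright; so if $B\neq\emptyset$ no such discovery is ever made, every $B_\sigma$ realizes both bits at position $e$, and one sets $m_i=e$. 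The divergence of $\{e\}^A(e)$ is thus the conclusion of the argument, not a padding hypothesis built in beforehand, and the index $e$ is itself the coding location --- a position in members of $B$, with no transfer through $\Psi$ needed. Note also that this one index handles all $2^i$ subclasses simultaneously, which your branching lemma cannot do, since distinct subclasses may branch at disjoint sets of levels. Once the construction is set up this way, the uniformity you flag as the main obstacle really is automatic: $e$ is produced by $s$-$m$-$n$ and the recursion theorem from the index of $B$ without ever running an $A$-computation, so $M$ is computable and its index depends only on the index of $B$.
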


We also use the relativized $s$-$m$-$n$ Theorem.

\begin{theorem}[\cite{Soare}] Let $m,n \geq 1$.  There exists an injective computable function $s^m_n$ such that for all reals $A$ and 
all $x, y_1, \ldots , y_m \num$ we have 
$$\{ s^m_n (x,y_1, \ldots, y_m)\}^A (z_1, \ldots , z_n) = \{x \}^A (y_1, \ldots , y_m, z_1, \ldots, z_n).$$ \label{smn} \end{theorem}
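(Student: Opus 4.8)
The statement is the iterated, oracle-relativized form of the $s$-$m$-$n$ theorem, so the plan is to reduce it to the classical construction and then verify that no step of that construction ever consults the oracle. First I would fix, once and for all, a standard (acceptable) numbering of the oracle Turing machines — the one used implicitly throughout the paper; for it the ordinary, unrelativized $s$-$m$-$n$ transformation already yields a computable function, and the only work is to check that this very same function serves every oracle $A$ simultaneously, and then to patch in injectivity.

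The core step is purely syntactic. Given an index $x$ and parameters $y_1, \ldots, y_m \num$, I would let $N$ be the machine that, on oracle $A$ and input $(z_1, \ldots, z_n)$, first prints the fixed constants $y_1, \ldots, y_m$ at the front of its input and then simulates the machine with index $x$ on oracle $A$ and input $(y_1, \ldots, y_m, z_1, \ldots, z_n)$, relaying each oracle query unchanged to $A$. A code for $N$ is produced from $x$ and $y_1, \ldots, y_m$ by a fixed primitive recursive transformation of the finite program text of $x$ together with the finite data $y_1, \ldots, y_m$; call the resulting computable function $s'$. Because this transformation never inspects $A$ — the oracle is merely carried along inertly, exactly as in the classical proof — the identity
$$\{s'(x, y_1, \ldots, y_m)\}^A (z_1, \ldots, z_n) = \{x\}^A (y_1, \ldots, y_m, z_1, \ldots, z_n)$$
holds simultaneously for every real $A$, with both sides diverging in exactly the same cases, and it is immediate from the definition of $N$.

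It remains to make the function injective, since $s'$ need not be. Here I would use (the trivially oracle-uniform) Padding Lemma: there is an injective computable $p$ with $\{p(e,j)\}^A = \{e\}^A$ for every real $A$ — for instance, let $p(e,j)$ be a code for the machine with index $e$ augmented by a block of unreachable states encoding the pair $(e,j)$, so that $(e,j)$ is recoverable from $p(e,j)$ and hence $p$ is injective. Fixing a computable injection $\langle\,\cdot\,\rangle : \omega^{m+1} \to \omega$, I would set $s^m_n(x, y_1, \ldots, y_m) = p\big(s'(x, y_1, \ldots, y_m),\, \langle x, y_1, \ldots, y_m\rangle\big)$. If $s^m_n(x, \bar y) = s^m_n(x', \bar y')$, injectivity of $p$ forces both components to agree, and agreement of the second component forces $(x, \bar y) = (x', \bar y')$ by injectivity of $\langle\,\cdot\,\rangle$; thus $s^m_n$ is injective, and since $\{p(e,j)\}^A = \{e\}^A$ the computation identity is inherited from $s'$.

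I do not expect a genuine obstacle. The one point that needs care is the claim that the whole construction is \emph{oracle-free}: one must be sure that neither the $s$-$m$-$n$ transformation nor the padding ever queries $A$. Both are manipulations of finite syntactic objects — program texts and numerals — so this is automatic, provided the chosen numbering of oracle machines is acceptable (admits a universal machine and the ordinary $s$-$m$-$n$ property), which is standard. The effort is entirely in the bookkeeping of keeping the oracle visibly uninvolved from start to finish.
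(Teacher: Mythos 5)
Your proposal is correct and is essentially the standard argument found in the cited source (Soare): the ordinary $s$-$m$-$n$ transformation is a purely syntactic rewriting of machine code that never consults the oracle, so the same computable function works uniformly in $A$, and injectivity is obtained by composing with an injective padding function. The paper gives no proof of its own here — it simply cites the result — so there is nothing to compare beyond noting that your reconstruction matches the textbook proof.
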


We can now prove the relativized form of Theorem \ref{main}.

\begin{theorem} Let $A$ and $X$ be reals such that $X \gtt A^\prime$.  Then there is a real $Y \gtt A$ such that 
$X \ett Y^\prime \ett Y \oplus A^\prime$. \label{relmain} \end{theorem}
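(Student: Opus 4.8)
The plan is to relativize the proof of Theorem \ref{main} to the oracle $A$, with the one additional feature that, since we now also require $Y \gtt A$, we code $A$ into $Y$ alongside $X$. So I would build $Y$ as a path through a shrinking sequence of $\Pi^{0,A}_1$ trees $T_i$, starting from $T_0 = \mathrm{PA}(A)$. At stage $i$ I use $A'$ --- which suffices because each $T_i$ is $\Pi^{0,A}_1$, so ``$\sigma \notin T_i$'' is a $\Sigma^{0,A}_1$ condition --- to ask whether there is a $d$ such that every string $\sigma$ of length $d$ extending $Y_i$ satisfies $\sigma \notin T_i$ or $\{i\}^\sigma (i) \conv$; if so I set $T_{i+1} = T_i[Y_i]$, forcing $i \in Y'$, and otherwise I set $T_{i+1} = T_i[Y_i] \setminus \{\sigma \in \strings \setsep \{i\}^\sigma (i) \conv \}$, forcing $i \notin Y'$. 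As before $[T_{i+1}] \not= \emptyset$. I then apply Theorem \ref{KSrel} to $[T_{i+1}]$ to get an infinite computable $M$ with $\mathrm{Restr}([T_{i+1}],M) = \reals$; writing $m_0 < m_1$ for its two least elements, I use $A'$ to let $Y_{i+1}$ be the leftmost $\sigma \in T_{i+1}$ of length $m_1 + 1$ with $\sigma (m_0) = X(i)$, $\sigma (m_1) = A(i)$, and $[T_{i+1}[\sigma]] \not= \emptyset$ (such a $\sigma$ exists since $\mathrm{Restr}([T_{i+1}],M) = \reals$). Because $X \gtt A' \gtt A$ the construction is computable in $X$; we read $Y'(i)$ off stage $i$; and from $Y \oplus A'$ we re-run the construction, recovering $X(i)$ and $A(i)$ from $Y$ at their coding locations. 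Thus $Y \oplus A' \equiv_T Y' \equiv_T X$ and $A \leq_T Y$.

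The real work is turning these Turing reductions into truth-table reductions by bounding the oracle use by functions that are computable outright, not merely computable in $A$. As in Theorem \ref{main} I would define, by simultaneous induction, computable functions $t$ and $l$ bounding respectively the indices of the $T_i$ (now indices of $\Pi^{0,A}_1$ classes) and the lengths of the $Y_i$, at each stage taking a maximum over all candidate values of the parameters $Y' \rstrd i$, $X \rstrd i$, $A \rstrd i$, and $T_j$, $Y_j$ for $j \leq i$. The index operations required are: from an index of $T$ to an index of $T \cap \{\tau \in \strings \setsep \{i\}^\tau (i) \dive \}$; from an index of $T$ to an index of $T[\sigma]$; and from an index of a $\Pi^{0,A}_1$ class to its two least coding locations. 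This is where Theorems \ref{KSrel} and \ref{smn} are used in their oracle-uniform form: the first two operations are realized by the relativized injective $s$-$m$-$n$ theorem applied to the enumeration $n \mapsto \strings \setminus W^A_n$, and the third by the ``moreover'' clause of Theorem \ref{KSrel} --- in each case by a computable function that does not consult $A$. Hence $t$ and $l$ are computable. Likewise, the $\Sigma^{0,A}_1$ questions posed to $A'$ at stage $i$ are, uniformly and independently of $A$, reducible to membership in $A'$, which lets me define a computable $g$ with $g(i)$ bounding the amount of $A'$ used through stage $i$.

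Given $t$, $l$, $g$, the three reductions follow as in Theorem \ref{main}. To compute $Y'(n)$ from $X$ we must reach stage $n$, which requires $X \rstrd n$, finitely many bits of $A$ (bounded computably, as $A \ltt X$), and $A' \rstrd g(n)$ (as $A' \ltt X$); all of this lies in a computable initial segment of $X$, so $Y' \ltt X$. To compute $X(n)$ or $A(n)$ from $Y$ and $A'$ we reach stage $n$ and read off a coding location, needing only $Y \rstrd l(n+1)$ and $A' \rstrd g(n)$; so $X \ltt Y \oplus A'$ and $A \ltt Y$. Unexpected inputs are handled exactly as in Theorem \ref{main}: when no legal extension of some $Y_{i+1}$ is found the computation enters an error state and outputs $0$'s, which only freezes the oracle use, so the bounds still hold. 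Finally, $A \ltt Y$ gives $A' \ltt Y'$ ($A'$ is c.e.\ in $A$, hence in $Y$), so $Y \oplus A' \ltt Y'$; together with $Y' \ltt X$ and $X \ltt Y \oplus A'$ this yields $X \ett Y' \ett Y \oplus A'$, and $A \ltt Y$ gives $Y \gtt A$.

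The step I expect to be the main obstacle is exactly the one the paper flags: ensuring $t$, $l$, and $g$ are computable rather than $A$-computable. A naive relativization that simply relativizes each ingredient of Theorem \ref{main} to $A$ fails here, since the obvious way to name the shrunken trees and the coding sets would read $A$; it is precisely to avoid this that one uses the uniform, oracle-free forms of the Ku\v{c}era--Slaman theorem (Theorem \ref{KSrel}) and of the $s$-$m$-$n$ theorem (Theorem \ref{smn}).
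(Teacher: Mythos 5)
Your plan is a sound relativization in broad outline, but there is a genuine gap at the step you treat most casually: establishing $A \ltt Y$ (or even $A \leq_T Y$). You propose coding $A(i)$ into $Y$ at the second coding location $m_1$ of the Ku\v{c}era--Slaman set $M$ obtained from $[T_{i+1}]$ at stage $i$. The problem is that $T_{i+1}$ --- and hence $M$ and its elements $m_0, m_1$ --- depends on which branch of the construction was taken at stage $i$, which in turn depends on the answer to a $\Sigma^{0,A}_1$ question posed to $A'$. So the position at which $A(i)$ sits inside $Y$ is not computable from $Y$ alone; to locate it you must rerun the construction, which needs $A'$. Your own phrasing reveals this: you say ``from $Y \oplus A'$ we re-run the construction, recovering $X(i)$ and $A(i)$'' and then conclude ``$A \leq_T Y$''. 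The argument as given yields only the vacuous $A \ltt Y \oplus A'$. This gap then propagates: you use $A \ltt Y$ to conclude $A' \ltt Y'$ and hence $Y \oplus A' \ltt Y'$, so without it the equivalence $Y' \ett Y \oplus A'$ is also unestablished.

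The paper circumvents this by changing the starting tree rather than adding a second bit of coding per stage. It sets $T_0 = C$ where $C = \{f \in \mathrm{PA}(A) \setsep \mathrm{Restr}(f,M) = A\}$ for a fixed computable $M$ given by the Ku\v{c}era--Slaman theorem applied to $\mathrm{PA}(A)$. Since $C$ is a $\Pi^{0,A}_1$ class and every path through $C$ (hence through every subsequent $T_i$) reproduces $A$ on the \emph{fixed, computable} set $M$, one gets $A \ltt Y$ immediately, with no need to recover coding locations from the construction. Everything else in your write-up --- the use of the oracle-uniform Theorem~\ref{KSrel}, the relativized $s$-$m$-$n$ theorem to keep the index functions $u,q,h,j$ computable, and the computable bounding functions $t,l,g$ --- matches the paper's approach, so replacing your per-stage coding of $A$ with the paper's choice of $T_0 = C$ would repair the proof.
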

\begin{proof} Following the approach in \cite{DNR}, we let $C = \{f \in \mbox{PA}(A) \setsep \mbox{Restr}(f,M) = A \}$ where $M$ is from 
Theorem \ref{KSresult} applied to PA($A$).  $C$ is a nonempty $\Pi^{0,A}_1$ class such that for every $f \in C$ we have $f \gtt A$ (we 
check the values of $f$ on the recursive set $M$)\@.  

We begin the construction with $T_0 = C$\@.  This ensures that $Y \gtt A$ (so also $Y^\prime \gtt A^\prime$)\@.  The rest 
of the construction remains essentially the same, changing the oracle $0^\prime$ to $A^\prime$ and using Theorem \ref{KSrel} 
instead of Theorem \ref{KSresult}.  We do not change $\{ i \}^\sigma$ to $\{ i \}^A$ since we wish to control $Y^\prime$, not 
$A^\prime$\@.  For the computation of the bounding functions, we note that $s$ is computable by Theorem \ref{KSrel}.  The functions 
$u$, $q$, $h$, and $j$ are computable by Theorem \ref{smn}.
\end{proof}      

\begin{cor} Let $n \num$ and let $X$ and $A$ be reals such that $X \gtt A^{(n)}$.  Then there exists a real $Y \gtt A$ such that 
$Y \oplus A^{(n)} \ett Y^{(n)} \ett X$. \label{mcor} \end{cor}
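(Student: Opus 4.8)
The plan is to induct on $n$, peeling off one application of Theorem \ref{relmain} at each step and splicing together the resulting chains of truth-table equivalences. For $n = 0$ there is nothing to prove: take $Y = X$, so $Y^{(0)} = X$, and since $X \gtt A = A^{(0)}$ we also have $Y \oplus A^{(0)} = X \oplus A \ett X$. (For $n = 1$ the statement is just Theorem \ref{relmain}.)

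For the inductive step I would assume the corollary for $n = k$ and take $X \gtt A^{(k+1)}$. Since $A^{(k+1)} = (A^{(k)})^\prime$, I can apply Theorem \ref{relmain} to the pair $A^{(k)}, X$ --- the hypothesis $X \gtt (A^{(k)})^\prime$ being exactly $X \gtt A^{(k+1)}$ --- to get a real $W \gtt A^{(k)}$ with
$$X \ett W^\prime \ett W \oplus A^{(k+1)}.$$
Now $W \gtt A^{(k)}$, so the induction hypothesis applied to the pair $A, W$ produces a real $Y \gtt A$ with
$$W \ett Y^{(k)} \ett Y \oplus A^{(k)}.$$
I claim this $Y$ works. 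First, $Y \gtt A$ by construction. Next, $Y^{(k+1)} = (Y^{(k)})^\prime \ett W^\prime \ett X$, using $Y^{(k)} \ett W$ together with the fact (discussed below) that the jump preserves truth-table equivalence. Finally $Y \oplus A^{(k+1)} \ett X$: on one side, $Y \ltt Y^{(k)} \ett W \ltt W^\prime \ett X$ and $A^{(k+1)} \ltt X$ give $Y \oplus A^{(k+1)} \ltt X$; on the other, $W \ett Y^{(k)} \ett Y \oplus A^{(k)} \ltt Y \oplus A^{(k+1)}$ combined with $X \ett W \oplus A^{(k+1)}$ gives $X \ltt Y \oplus A^{(k+1)}$. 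Thus $Y \oplus A^{(k+1)} \ett Y^{(k+1)} \ett X$, closing the induction.

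The only fact used beyond Theorem \ref{relmain} is that $U \ett V$ implies $U^\prime \ett V^\prime$ (whence $U^{(k)} \ett V^{(k)}$ by iteration, though here only the single-jump version is needed). This holds because a total truth-table functional $\Phi$ with $\Phi(V) = U$ can be composed with an arbitrary oracle machine: by the $s$-$m$-$n$ theorem there is an injective computable $g$ so that $\{g(e)\}^V(e)$ runs $\{e\}$, answering each oracle query by computing the relevant bit of $U$ as $\Phi^V$ (which always converges since $\Phi$ is total); then $\{g(e)\}^V(e)\conv \Leftrightarrow e \in U^\prime$, so $U^\prime \leq_1 V^\prime$, and symmetrically $V^\prime \leq_1 U^\prime$. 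I expect the main thing requiring care to be purely organizational: keeping straight, in each invocation, which real plays the oracle parameter ($A^{(k)}$, resp.\ $A$) and which plays the role of $X$ ($X$, resp.\ $W$), so that the hypotheses of Theorem \ref{relmain} and of the induction hypothesis are genuinely met and their conclusions chain correctly.
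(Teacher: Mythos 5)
Your proof is correct and follows essentially the same route as the paper: apply Theorem \ref{relmain} with base $A^{(k)}$ to peel off the outermost jump, then apply the inductive hypothesis with base $A$ to the resulting real, and chain the truth-table equivalences using monotonicity of the jump with respect to $\ltt$. The only superficial differences are that you start the induction at $n=0$ rather than $n=1$, and you spell out the jump-monotonicity lemma that the paper leaves implicit.
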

\begin{proof} We prove the statement by induction.  The base case is given by Theorem \ref{relmain}.  For the inductive 
case, assume the statement holds for $n$ and let $X \gtt A^{(n+1)}$ be arbitrary.  By Theorem \ref{relmain} with base $A^{(n)}$ 
there exists a $Z \gtt A^{(n)}$ such that $Z \oplus A^{(n+1)} \ett Z^\prime \ett X$.  Applying the inductive 
hypothesis to $Z$ we obtain a real $Y \gtt A$ such that $Y \oplus A^{(n)} \ett Y^{(n)} \ett Z$.  We then note 
$Y \oplus A^{(n+1)} \gtt Z \oplus A^{(n+1)} \gtt X \gtt Z^\prime \gtt Y^{(n+1)}$.  Hence 
$Y \oplus A^{(n+1)} \ett Y^{(n+1)} \ett X$, completing the induction.  \end{proof}

\section{Applications to automorphisms}

To apply strong jump inversion to automorphisms of the truth-table degrees, we need a way to code a real in the truth-table 
degrees using a finite number of parameters.  We can use the method of Nerode and Shore \cite{cone} to do this, but obtain 
a slightly lower base for the cone by using a theorem of Mytilinaios and Slaman \cite{ttcoding}.  

The theorem below gives a finite set of parameters $\vec{p}$ and a countable set of reals $G_1, G_2, \ldots$ representing the 
natural numbers.  These reals satisfy equations which allow us to uniquely determine $G_{i+1}$ given $\vec{p}$ and $G_i$.  
Hence $\vec{p}$ and $G_1$ uniquely determine every $G_i$.  Since the relations depend only on $\ltt$, they are invariant under 
automorphisms of the truth-table degrees.

Let $(X)$ denote the ideal generated by $X$ (the $tt$-degrees below $X$).  

\begin{theorem}[Mytilinaios and Slaman \cite{ttcoding}] Let $B \gtt 0^\prime$.  Then there exist reals 
$\vec{p} = \langle E_1, E_2, D_1, D_2, F_1, F_2 \rangle \ltt B^{\prime \prime}$ and 
$\langle G_n \setsep n \num \rangle \ltt B^{\prime \prime}$ (uniformly) such that:
\begin{enumerate}
\item{B is $tt$-computable in each component of $\vec{p}$ and $B \ltt G_n$ for all $n \num$.}
\item{For any $G_{n_1}, G_{n_2}, \ldots G_{n_k}$ and $m \not= n_j$ for all $j<k$ we have\\$(G_{n_1} \oplus \ldots \oplus G_{n_k}) \cap (G_m) = (B)$.}
\item{$D_1 \not\geq_{tt} D_2$ and for any $n \num$ we have $D_1 \oplus G_n \gtt D_2$.}
\item{For $n$ odd, $(F_1 \oplus G_n) \cap (E_1) = (G_{n+1})$ and \\For $n$ even, $(F_2 \oplus G_n) \cap (E_2) = (G_{n+1})$.} 
\end{enumerate} \label{slamyt1} \end{theorem}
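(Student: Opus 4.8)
My approach would be to build all of $E_1, E_2, D_1, D_2, F_1, F_2$ together with $\langle G_n \setsep n \num\rangle$ by a single finite-extension (priority) construction run uniformly relative to $B$, in the spirit of the coding lemmas of Nerode--Shore \cite{cone} and Slaman--Woodin \cite{auto}. A condition is a tuple of finite binary strings approximating the finitely many parameter reals together with the finitely many $G_n$ acted on so far, and each requirement is of one of two kinds: a \emph{coding} instruction, which records a bit at a reserved coordinate, or a \emph{diagonalization/genericity} instruction, which for a given tuple of truth-table functionals forces either a disagreement or a reduction to a prescribed ``floor'' real. Since the genericity instructions ask about totality and domination of tt-functionals relative to $B$, the construction can be organized as a priority construction computable in $B''$, which is what yields $\vec p \ltt B''$ and $\langle G_n\rangle \ltt B''$. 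To make the reductions we certify genuinely \emph{truth-table} rather than merely Turing, we must, exactly as in the proof of Theorem \ref{main}, exhaust at each stage the finitely many possibilities for the oracle approximations used and take the worst case, obtaining computable bounds on every use.

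The coding requirements are the routine half. Reserving disjoint recursive sets of coordinates and copying $B$ into each parameter and each $G_n$ gives part (1) and the ``$(B)\subseteq$'' halves of (2) and (4). For the successor reductions $G_{n+1}\ltt F_1\oplus G_n$ ($n$ odd) and $G_{n+1}\ltt F_2\oplus G_n$ ($n$ even) I would fix in advance a uniform decoding scheme, so that $F_i$ carries, on a block of coordinates indexed by $n$, the bits of $G_{n+1}$ in a form read off jointly from $F_i$ and $G_n$ with fixed, hence tt-bounded, use. Part (3) needs the same idea, made uniform in $n$: a single $D_1$ must, together with any one $G_n$, recover $D_2$, so when we code a bit of $D_2$ we extend $D_1$ and $G_n$ together in such a way that $G_n$ itself locates, among finitely many candidate coordinates of $D_1$, the one carrying that bit (an adaptive but boundedly-used query). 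This competes with the diagonalization $D_1\not\geq_{tt}D_2$, which priority resolves in the usual way: the candidate coordinates used for a given bit of $D_2$ can be chosen large, so the diagonalization witness against a given functional is placed on coordinates of $D_1$ beyond the (fixed computable) use of that functional and beyond all coding committed so far.

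The genericity requirements carry the real content. For (2), fix a finite $S$, an $m\notin S$, and tt-functionals $\Phi,\Psi$; when next acting on $G_m$ we extend it so that $\Phi$ of the already-committed join $\bigoplus_{n\in S}G_n$ and $\Psi(G_m)$ either already disagree or agree only on values fixed by the internal copy of $B$. A $B''$-level case analysis shows such an extension exists, and the accumulated restraint then forces the common part of the two sides to be $\ltt B$, giving ``$\cap(G_m)\subseteq(B)$''; the reverse inclusion is immediate from $B\ltt G_n$. The requirements for $D_1\not\geq_{tt}D_2$ are the classical finite-extension diagonalization. Part (4) is the delicate case: beyond the two codings one needs $(F_i\oplus G_n)\cap(E_i)\subseteq(G_{n+1})$, i.e.\ the pair $(F_i\oplus G_n,\, E_i)$ must behave as an exact pair whose ``$n$-th component'' is exactly $G_{n+1}$. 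I would build $E_i$ to code the whole sequence $\langle G_{n+1}:n\text{ of the right parity}\rangle$ only through an exact-pair presentation, and then impose, for every triple of tt-functionals, the genericity requirement that anything computable from both $E_i$ and $F_i\oplus G_n$ be recoverable from the single block of $E_i$ carrying $G_{n+1}$, which by the decoding scheme for $F_i$ is tt-equivalent to being $\ltt G_{n+1}$; splitting the successor coding over the two pairs $(E_1,F_1)$ and $(E_2,F_2)$ according to parity keeps the exact-pair requirements for consecutive stages from interfering.

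The main obstacle, as usual for constructions of this flavor, is the simultaneity in parts (3) and (4): securing the \emph{exact} intersections rather than merely the two inclusions while the codings $B\ltt(\text{everything})$, $G_{n+1}\ltt F_i\oplus G_n$, and $D_2\ltt D_1\oplus G_n$ are installed on overlapping sets of coordinates, and doing all of it with uniformly tt-bounded use so that the final reductions are honestly truth-table and land below $B''$. Once the priority ordering that separates coding blocks from diagonalization witnesses is fixed, the verification is a finite-injury argument relative to $B''$ combined with the use-bound computation of Theorem \ref{main}; this is in essence the construction carried out by Mytilinaios and Slaman \cite{ttcoding}, whose statement we take as given.
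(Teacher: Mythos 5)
The paper does not reprove this theorem: it is quoted from Mytilinaios and Slaman \cite{ttcoding} and used as a black box. The one point the paper must (and, in the paragraph following the statement, does) address is relativization: \cite{ttcoding} states the result only for $B = 0^\prime$ (their $O$), and the paper's entire ``proof'' is the observation that the construction relativizes to arbitrary $B \gtt 0^\prime$ because the complexity bounds in \cite{ttcoding} are already phrased in terms of $B$, and the only property of $B$ the construction uses is $B \gtt 0^\prime$, which allows it to enumerate all $tt$-reductions. Your sketch never raises this; you write the construction directly for arbitrary $B \gtt 0^\prime$ as if that were the form of the cited theorem, thereby silently assuming the one step the present paper actually has to supply.

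As for the sketch itself: it is a plausible gloss of the Mytilinaios--Slaman construction --- coding blocks for part (1) and the easy inclusions, finite-extension genericity requirements for exactness of the intersections in (2) and (4), organization as a priority construction below $B^{\prime\prime}$, and use-bounding in the style of Theorem~\ref{main} --- and you correctly flag the simultaneity of (3) and (4) as the crux. But you explicitly conclude by taking the statement ``as given,'' so what you have written is an annotated citation rather than a proof; the hard exact-pair requirements in (4) and the uniform-in-$n$ recovery $D_2 \ltt D_1 \oplus G_n$ in (3) are named but not carried out. In the context of this paper that is acceptable, since the paper likewise treats the theorem as a citation; what you should add is the short relativization argument above, since that is the only content the paper itself contributes to this statement.
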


In Mytilinaios and Slaman \cite{ttcoding} this theorem is given with the value of $0^\prime$ for $B$ (the variable $O$ is used in 
the paper).  However, it is clear that the proof works for any $B \gtt 0^\prime$.  The complexity bounds used in the construction 
are given in terms of $B$ and the construction makes no use of any property of $B$ except an implied use of the fact that 
$B \gtt 0^\prime$ (so the construction can enumerate all $tt$-reductions).

We use Theorem \ref{slamyt1} with the next theorem to code a real $S$.  Given $S$ we find a real $Q$ such that $G_i$ satisfies a 
certain relation involving $Q$ if and only if $i \in S$.  Again, since the relation only depends on $\ltt$ it is invariant under 
automorphisms.

\begin{theorem}[Mytilinaios and Slaman \cite{ttcoding}] Let $B$, $\vec{p}$, and $\langle G_n \setsep n \num \rangle$ satisfy the 
conditions above.  Let $S \subseteq \omega$.  Then $S$ is $\Sigma^0_2 (\vec{p}) \Leftrightarrow \exists Q [Q \ltt \vec{p} \ \wedge\\
\forall i \num \,[i \in S \leftrightarrow \exists X [ X \ltt G_i \ \wedge\ X \ltt Q \ \wedge\ D_2 \ltt X \oplus D_1]]]$.\label{slamyt2} \end{theorem}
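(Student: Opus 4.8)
The plan is to prove the biconditional by treating its two directions separately; the forward direction (from $S$ being $\Sigma^0_2(\vec{p})$ to the existence of a coding real $Q$) carries the real work, while the reverse direction is an arithmetical estimate that leans on the same structural facts.

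For the forward direction, write the hypothesis that $S$ is $\Sigma^0_2(\vec{p})$ as: $i \in S$ iff $\exists N\, \theta(i,N)$, for a $\Pi^0_1(\vec{p})$ predicate $\theta$. The construction underlying Theorem~\ref{slamyt1} furnishes, uniformly in $i$ and $N$, a ``chunk'' of $G_i$ --- a restriction of $G_i$ to a designated computable index set, hence itself $tt$-below $G_i$ --- such that each chunk individually realizes item~(3) (it computes $D_2$ together with $D_1$) and the chunks jointly realize the independence in item~(2). I would then build $Q$ by a $\vec{p}$-recursive construction with coding regions indexed by pairs $(i,N)$: region $(i,N)$ records the $N$-th chunk of $G_i$ bit by bit, but freezes into padding as soon as a counterexample to $\theta(i,N)$ appears. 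Detecting such a counterexample by a given stage is $\vec{p}$-decidable, so every bit of $Q$ is $\vec{p}$-decidable; equipping the construction with computable bounds on the region sizes --- the discipline from the proof of Theorem~\ref{main} --- upgrades $Q \leq_T \vec{p}$ to $Q \ltt \vec{p}$.

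For the verification: if $i \in S$, pick $N$ with $\theta(i,N)$; region $(i,N)$ never freezes, so the $N$-th chunk of $G_i$ is fully recorded in $Q$, and that chunk is a witness $X$ (it is $\ltt G_i$ since its index set is computable, $\ltt Q$ since it is recorded in $Q$, and $D_2 \ltt X \oplus D_1$ by the robust form of item~(3)). If $i \notin S$, every region $(i,N)$ eventually freezes, so $Q$ records only finitely much of each chunk of $G_i$; the robust form of item~(2) then gives $(Q) \cap (G_i) = (B)$, so any candidate $X$ with $X \ltt G_i$ and $X \ltt Q$ satisfies $X \ltt B$, whence $D_2 \ltt X \oplus D_1 \ltt B \oplus D_1 \ett D_1$ (item~(1) gives $B \ltt D_1$), contradicting $D_1 \not\geq_{tt} D_2$ from item~(3). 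For the reverse direction, one runs the same structural analysis on an arbitrary $Q \ltt \vec{p}$ realizing the coding: items~(2) and~(3) (together with the fact that all of $D_1, D_2, Q, G_i$ are uniformly $tt$-below $\vec{p}$ --- item~(4) gives $G_n \ltt E_1$ or $G_n \ltt E_2$) collapse the clause $\exists X[X \ltt G_i \wedge X \ltt Q \wedge D_2 \ltt X \oplus D_1]$ to ``$\exists N$ such that the $N$-th chunk of $G_i$ is correctly decoded from $Q$'', a $\Pi^0_1(\vec{p})$ matrix under an $\exists N$, hence $\Sigma^0_2(\vec{p})$; so $S$ is $\Sigma^0_2(\vec{p})$.

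The main obstacle is the interaction between the single real $Q$ and the infinitely many $G_i$ it must code for at once: item~(2) of Theorem~\ref{slamyt1}, as stated, gives independence only for finitely many $G_n$ simultaneously, whereas a real $X \ltt Q$ can, through its computably-bounded but unbounded use of $Q$, draw on the coding regions of infinitely many $G_j$ and many chunks. So the ``$i \notin S$'' step cannot be run from the stated conditions alone; one must re-enter the Mytilinaios--Slaman construction of $\vec{p}$ and the $G_n$ and build $Q$ alongside them, arranging the robust forms of items~(2) and~(3) --- that this particular countable family of recorded chunks still meets $(G_i)$ only in $(B)$ whenever $i \notin S$, and that each chunk is $tt$-below $Q$ only via its canonical decoding. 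Interlocked with this is the need for $Q \ltt \vec{p}$ rather than merely $Q \leq_T \vec{p}$, which is what forces the ``record only while $\theta(i,N)$ survives'' device and the computable bounding, exactly as in Theorem~\ref{main}. This is the content of \cite{ttcoding}, and it is why we quote Theorem~\ref{slamyt2} from there rather than reprove it.
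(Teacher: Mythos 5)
The paper does not prove Theorem~\ref{slamyt2} at all; it quotes it, like Theorem~\ref{slamyt1}, directly from Mytilinaios and Slaman \cite{ttcoding}, so there is no in-paper argument to compare yours against. You recognize this in your closing sentence, and your final paragraph correctly pinpoints why the result cannot be recovered from the quoted statements alone: item~(2) of Theorem~\ref{slamyt1} is an independence claim only for finitely many $G_n$ at a time, whereas the single $Q$ you build must simultaneously avoid accidental witnesses $X \ltt G_i$ for infinitely many $i \notin S$, and nothing quoted prevents a $tt$-reduction from $Q$ that splices together pieces of infinitely many of your coding regions. Closing that gap forces one back into the Mytilinaios--Slaman construction itself, building $Q$ alongside $\vec{p}$ and the $G_n$; citing rather than reproving, as the paper does, is the right call.

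One additional flaw in the sketch: your description of the reverse direction reads as though an \emph{arbitrary} $Q \ltt \vec{p}$ satisfying the coding clause still carried your ``chunk'' structure, which it need not. The reverse implication is just a complexity count --- replace $\exists X$ by existential quantifiers over indices of total $tt$-reductions with $\Phi_a(G_i) = X = \Phi_b(Q)$ and $\Phi_c(X \oplus D_1) = D_2$; agreement of total $tt$-reductions is $\Pi^0_1$ in the oracles, so the matrix is $\Sigma^0_2$ relative to $G_i \oplus Q \oplus D_1 \oplus D_2$. Folding this into $\Sigma^0_2(\vec{p})$ then needs the $G_i$ to be \emph{uniformly} $\ltt \vec{p}$ in $i$, which is not immediate from what is quoted: Theorem~\ref{slamyt1} only asserts $\langle G_n \rangle \ltt B''$ uniformly, and item~(4) is an ideal equation, not a stated uniform $tt$-reduction producing $G_{n+1}$ from $\vec{p}$ and $G_n$. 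So even the ``easy'' direction leans on detail internal to \cite{ttcoding} and is not a purely formal consequence of the two quoted theorems.
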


We can now outline the proof that every automorphism of the truth-table degrees is fixed on a cone.  We say $q$ codes $S$ for $\vec{p}$ and 
$\langle G_n \setsep n \num \rangle$ over base $B$ if $q$ satisfies the conditions for the set witnessing the result of Theorem \ref{slamyt2} for 
the set $S$, when using $\vec{p}$, $\langle G_n \setsep n \num \rangle$, and $B$ for the sets satisfying the conditions from Theorem \ref{slamyt1}.  
We omit the base when it is clear from context.  We note that if $Q$ codes a real $R$ for $\vec{p}$ and $\langle G_n \setsep n \num \rangle$ over base 
$B$, and $R$ is such that $R \ett B^{\prime \prime}$, then $\vec{p} \ltt R$.  Furthermore $R$ is $\Sigma^0_2 (\vec{p})$, so 
$R \ltt (\vec{p})^{\prime \prime}$.

Suppose we let $R$ in the above example have degree $\pi (y)$ for an automorphism $\pi$ and some sufficiently high $y$.  Then $\vec{p} \ltt \pi (y)$ 
so $\pi^{-1} (\vec{p}) \ltt y$.  Since the relations are invariant under automorphisms, $\pi^{-1} (q)$ codes $R$ for $\pi^{-1} (\vec{p})$ and 
$\langle \pi^{-1} (G_n) \setsep n \num \rangle$.  Hence $R \ltt (\pi^{-1} (\vec{p}))^{\prime \prime}$, as above.  Thus $R \ltt y^{\prime \prime}$ so 
$\pi (y) \ltt y^{\prime \prime}$.  

We next apply strong jump inversion.  Let $d$ be some sufficiently high base degree and let 
$x \gtt d^{\prime \prime} \oplus \pi (d^{\prime \prime})$.  By strong jump inversion relative to $d$, there is a $y \gtt d$ such that 
$x \ett y^{\prime \prime} \ett y \oplus d^{\prime \prime}$.  By the above paragraph, $\pi (y) \ltt y^{\prime \prime}$.  We then have 
$\pi (x) \ett \pi (y \oplus d^{\prime \prime}) \ett \pi(y) \oplus \pi (d^{\prime \prime}) \ltt y^{\prime \prime} \oplus x \ltt x$.  
We complete the proof by symmetry.  

We formalize this argument below.

\begin{theorem} Let $\pi : \Dtt \to \Dtt$ be an automorphism.  Then there is a degree $b$ such that for all degrees $x \gtt b$ we have 
$\pi (x) = x$. \end{theorem}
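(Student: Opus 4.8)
The plan is to formalize the outline that precedes the statement. Write $d = \pi^{-1}(0^{\prime\prime\prime})$ and $e = \pi(0^{\prime\prime\prime})$, and let $b$ be the $tt$-degree of $d^{\prime\prime} \oplus \pi(d^{\prime\prime}) \oplus e^{\prime\prime} \oplus \pi^{-1}(e^{\prime\prime})$. I will show that every $x \gtt b$ satisfies both $\pi(x) \ltt x$ and $\pi^{-1}(x) \ltt x$; applying $\pi$ to the second inequality gives $x \ltt \pi(x)$, so $\pi(x) = x$. The map $\pi^{-1}$ is again an automorphism, and running the first inequality for $\pi^{-1}$ simply interchanges the roles of $d$ and $e$; since $b$ dominates the bases arising in both runs, it therefore suffices to prove the single claim: \emph{for every $x \gtt d^{\prime\prime} \oplus \pi(d^{\prime\prime})$ we have $\pi(x) \ltt x$.}

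To prove the claim, fix $x \gtt d^{\prime\prime} \oplus \pi(d^{\prime\prime})$ and apply strong jump inversion twice. Since $x \gtt d^{\prime\prime}$, Corollary~\ref{mcor} with $n = 2$ and base $d$ yields a real $y \gtt d$ with $y \oplus d^{\prime\prime} \ett y^{\prime\prime} \ett x$. Then $\pi(y) \gtt \pi(d) = 0^{\prime\prime\prime}$, so Corollary~\ref{mcor} again, now with $n = 2$ and base $0^\prime$, yields a real $B \gtt 0^\prime$ with $B^{\prime\prime} \ett \pi(y)$.

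The heart of the argument is to deduce $\pi(y) \ltt y^{\prime\prime}$ from the invariant coding machinery. Apply Theorem~\ref{slamyt1} with base $B$ to obtain parameters $\vec p = \langle E_1, E_2, D_1, D_2, F_1, F_2 \rangle$ and a sequence $\langle G_n \setsep n \num \rangle$, all $\ltt B^{\prime\prime}$; in particular $\vec p \ltt B^{\prime\prime} \ett \pi(y)$. Because $B$ is $tt$-computable in each component of $\vec p$, the set $B^{\prime\prime}$ is $\Sigma^0_2(\vec p)$, so by Theorem~\ref{slamyt2} there is a real $q \ltt \vec p$ that codes $R := B^{\prime\prime}$ for $\vec p$ and $\langle G_n \setsep n \num \rangle$ over base $B$. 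Now $R$ has $tt$-degree $\pi(y)$, and from $\vec p \ltt \pi(y)$ we get $\pi^{-1}(\vec p) \ltt y$. The structural conditions of Theorem~\ref{slamyt1} and the coding relation of Theorem~\ref{slamyt2} are expressed purely in terms of $\ltt$ and $\oplus$, hence invariant under $\pi$; so $\pi^{-1}(q)$ codes the same set of natural numbers $R$ for $\pi^{-1}(\vec p)$ and $\langle \pi^{-1}(G_n) \setsep n \num \rangle$ over base $\pi^{-1}(B)$. Feeding this transformed system into the backward direction of Theorem~\ref{slamyt2} shows $R$ is $\Sigma^0_2(\pi^{-1}(\vec p))$, whence $R \ltt (\pi^{-1}(\vec p))^{\prime\prime} \ltt y^{\prime\prime}$; that is, $\pi(y) \ltt y^{\prime\prime}$.

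Finally I assemble the pieces. Since $x \ett y \oplus d^{\prime\prime}$ and $\pi$ preserves joins, $\pi(x) \ett \pi(y) \oplus \pi(d^{\prime\prime})$. We have $\pi(y) \ltt y^{\prime\prime} \ett x$ and $\pi(d^{\prime\prime}) \ltt b \ltt x$, so $\pi(y) \oplus \pi(d^{\prime\prime}) \ltt x$ and therefore $\pi(x) \ltt x$, establishing the claim. The step I expect to be the main obstacle is making the invariance argument of the previous paragraph airtight: one must check that the $\pi^{-1}$-images of $\vec p$, $\langle G_n \rangle$, and $q$, together with the base $\pi^{-1}(B)$, really do satisfy all the hypotheses needed to invoke Theorem~\ref{slamyt2} in the backward direction, and in particular that the conclusion is $\Sigma^0_2$ in $\pi^{-1}(\vec p)$ itself rather than in some jump of it. The order-theoretic conditions transfer verbatim, so the work is in confirming that the uniformity built into the Mytilinaios--Slaman theorems survives the translation and that the base $b$ has been chosen high enough to absorb any residual oracle (such as $\pi^{-1}(0^\prime)$) introduced by applying $\pi^{-1}$.
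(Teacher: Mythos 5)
Your proposal is correct and follows essentially the same route as the paper: choose the same base $b = d''\oplus\pi(d'')\oplus e''\oplus\pi^{-1}(e'')$, apply Corollary~\ref{mcor} twice to produce $y$ and $B$ (the paper calls it $Z$), use Theorems~\ref{slamyt1} and~\ref{slamyt2} together with the invariance of the coding conditions under $\pi^{-1}$ to conclude $\pi(y) \ltt y''$, and then assemble $\pi(x) \ltt x$ and close by symmetry. The only cosmetic deviations are that you invoke join-preservation directly to get $\pi(x) \ett \pi(y)\oplus\pi(d'')$, whereas the paper routes through $\pi^{-1}(x)$, and that you flag (correctly, and candidly) the transfer of the Mytilinaios--Slaman hypotheses under $\pi^{-1}$ as the step needing the most care --- a point the paper also glosses over without further elaboration.
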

\begin{proof} Let $d = \pi^{-1} (0^{\prime \prime \prime})$ and $e = \pi (0^{\prime \prime \prime})$.  Let 
$b = d^{\prime \prime} \oplus \pi (d^{\prime \prime}) \oplus e^{\prime \prime} \oplus \pi^{-1} (e^{\prime \prime})$ and let 
$x \gtt b$ be arbitrary.

Since $x \gtt d^{\prime \prime}$, by Corollary \ref{mcor} with base $d$ there is a $y \gtt d$ such that 
$x \ett y^{\prime \prime} \ett y \oplus d^{\prime \prime}$.  We note $\pi (y) \gtt \pi (d) = 0^{\prime \prime \prime}$.  By 
Corollary \ref{mcor} with base $0^\prime$, there is a $Z \gtt 0^\prime$ such that $Z^{\prime \prime} \ett \pi (y)$.

Let $\vec{p} \ltt Z^{\prime \prime}$ and $\langle G_n \setsep n \num \rangle$ be given by Theorem \ref{slamyt1} with base $Z$.  
We note $Z^{\prime \prime}$ is $\Sigma^0_2 (Z)$ and $Z \ltt \vec{p}\,$, so $Z^{\prime \prime}$ is $\Sigma^0_2 (\vec{p})$.  By 
Theorem \ref{slamyt2}, let $q \ltt \vec{p}$ code $Z^{\prime \prime}$ for $\vec{p}$ and $\langle G_n \setsep n \num \rangle$.

Since $\ltt$ is preserved under automorphisms, $\pi^{-1} (q)$ codes $Z^{\prime \prime}$ for $\pi^{-1} (\vec{p})$ and 
$\langle \pi^{-1} (G_n) \setsep n \num \rangle$.  By Theorem \ref{slamyt2}, $Z^{\prime \prime}$ is $\Sigma^0_2 (\pi^{-1} (\vec{p}))$.

Since $\vec{p} \ltt Z^{\prime \prime} \ett \pi (y)$, we have $\pi^{-1} (\vec{p}) \ltt y$.  Hence $Z^{\prime \prime}$ is $\Sigma^0_2 (y)$.  
Thus $Z^{\prime \prime} \ltt y^{\prime \prime}$ so $\pi (y) \ltt y^{\prime \prime} \ltt x$ (since we chose $y$ and $Z$ such that 
$Z^{\prime \prime} \ett \pi (y)$ and $y^{\prime \prime} \ett x$).

We chose $x \gtt b$ so we have $\pi (d^{\prime \prime}) \ltt x$.  Hence $\pi (y) \oplus \pi (d^{\prime \prime}) \ltt x$ so 
$y \oplus d^{\prime \prime} \ltt \pi^{-1} (x)$.  We chose $y$ such that $y \oplus d^{\prime \prime} \ett x$ so we have 
$x \ltt \pi^{-1} (x)$.  Thus $\pi (x) \ltt x$.

We have used the fact that $x \gtt d^{\prime \prime} \oplus \pi (d^{\prime \prime})$ to prove that $\pi (x) \ltt x$.  By symmetry, we 
can use the fact that $x \gtt e^{\prime \prime} \oplus \pi^{-1} (e^{\prime \prime})$ to prove that $\pi^{-1} (x) \ltt x$ so 
$x \ltt \pi(x)$.  Therefore $\pi (x) = x$.
\end{proof}

We note that if $\pi$ is such that $\pi (0^{\prime \prime \prime}) = 0^{\prime \prime \prime}$ and $\pi (0^{(5)}) = 0^{(5)}$ then the 
base of the cone is $0^{(5)}$.  In particular, for the structure of the truth-table degrees with jump, $(\Dtt, \ltt, \prime)$, every 
automorphism is fixed on the cone with base $0^{(5)}$.  By using lattice embeddings to work with the structure $(\Dtt, \ltt, \prime)$ 
directly, Kjos-Hanssen \cite{Bjorn} proved every automorphism is fixed on the cone with base $0^{(4)}$. 

Many of the results of Slaman and Woodin \cite{auto} which follow from the fact that every automorphism of the Turing degrees is fixed 
on a cone can be modified for the truth-table degrees.  For example, let $\pi : \Dtt \to \Dtt$ be an automorphism and let $b$ be the 
base of the cone on which $\pi =$ id.  Then for any ideal $I$ of $\Dtt$ with $b \in I$, the restriction $\pi \rstrd I$ 
is an automorphism of $I$.  To prove this, we observe that for any $x \in I$ we have $x \oplus b \in I$ and $b \ltt x \oplus b$.  Hence 
$\pi (x) \ltt \pi (x \oplus b) = x \oplus b$ so $\pi (x) \in I$, and similarly for $\pi^{-1}$.

\section{Conclusion}

One route for further exploration of the automorphisms of the truth-table degrees, would be to prove truth-table degree analogues of some 
of the major results of Slaman and Woodin \cite{auto} on automorphisms of the Turing degrees.  The next major obstacle in doing this is 
finding a way to code antichains in the structure $\Dtt$ using a finite number of parameters.

Given a countable antichain of truth-table degrees $A = \langle A_n \in \Dtt \setsep n \num \rangle$, we wish to find a finite set of 
parameters $\vec{p} \in \Dtt$ and a formula $\psi$ in the language $(\Dtt, \ltt)$ such that $x \in A \Leftrightarrow \psi (\vec{p}, x)$.  
The parameters $\vec{p}$ should at least be arithmetic in $A$, and preferably $\Sigma^0_n (A)$ for some low $n$.  

The proof of this statement for the Turing degrees given in Slaman and Woodin \cite{auto} cannot be directly adapted.  The proof relies 
heavily on the fact that every Turing degree contains a real which is computable in any infinite subset of itself.  A corresponding 
fact is false for the truth-table degrees.  Kjos-Hanssen, Merkle, and Stephan \cite{thin}, building on a result of Miller \cite{Miller}, 
proved that a real $A$ computes a diagonally non-computable function if and only if there is a real $B \leq_T A$ such that there is no 
hyperimmune real $C \geq_{wtt} B$.  Hence for such a $B$, given any $D \ett B$ we can take an infinite $C \subseteq D$ thin enough so that 
$C$ is hyperimmune and conclude $D \not\leq_{tt} C$\@.

Finding a way to code antichains will likely result in a proof that the statement ``There is a nontrivial automorphism of the 
truth-table degrees'' is absolute between well-founded models of ZFC.  It will also provide a necessary step in using the methods of Slaman 
and Woodin \cite{auto} to prove truth-table degree analogues of their other results on automorphisms of the Turing degrees.

There are several other open questions we can consider.  Sacks \cite{SacksJI} proved that given a $\Sigma^0_2$ real $X \geq_T 0^\prime$, 
there is a c.e.\ real $Y$ such that $Y^\prime \equiv_T X$.  Does some form of Sacks jump inversion hold for the truth-table degrees: given 
a $\Sigma^0_2$ real $X \gtt 0^\prime$, is there a c.e.\ real $Y$ such that $Y^\prime \ett X$ ?  Can we find such a $Y$ if we replace the 
requirement that $Y$ be c.e.\ with $Y \ltt 0^\prime$ or with $Y$ is $\Delta^0_2$ ?  Finally, is $\leq_T$ definable in $(\Dtt, \ltt)$ ?

\nocite{*}
\bibliography{ttauto}
\end{document}